\newtheorem{theorem}{Theorem}[section]
\newtheorem{lemma}[theorem]{Lemma}
\newtheorem{corollary}[theorem]{Corollary}
\numberwithin{equation}{section}
\begin{document}

\title [Stability Criterion for Infinite Matrices]{Stability Criterion for Convolution-Dominated
Infinite Matrices}% in the Sj\"ostrand Class}
\author{Qiyu Sun}
\address{Department of Mathematics, University of Central Florida, Orlando,
FL 32816, USA\newline Email: \ {\tt qsun@mail.ucf.edu} }

\subjclass[2000]{47B35, 40E05, 65F05, 42C40, 47G30, 94A20}

\begin{abstract} Let $\ell^p$ be the space of all $p$-summable sequences on $\mathbb{Z}$.
An infinite matrix is said to have $\ell^p$-stability if it is bounded and has bounded inverse
on $\ell^p$. In this paper, a practical criterion is established
for the $\ell^p$-stability of convolution-dominated infinite
matrices.
  \end{abstract}

\maketitle

%%==================================================================
\section{Introduction}
%%==================================================================

Let ${\mathcal C}$ be the set of all infinite matrices
$A:=(a(j,j'))_{j,j'\in \mathbb{Z}}$ with
$$\|A\|_{\mathcal C}=\sum_{k\in \mathbb{Z}} \sup_{j-j'=k}
|a(j,j')|<\infty.$$ Let $\ell^p:=\ell^p(\mathbb{Z})$ be  the set
of all $p$-summable sequences  on $\mathbb{Z}$ with the standard
norm $\|\cdot\|_{p}$. An infinite matrix $A:=(a(j,j'))_{j,j'\in
\mathbb{Z}}\in {\mathcal C}$ defines a bounded linear operator on
$\ell^p, 1\le p\le\infty$, in the sense that
\begin{equation}
\label{matrixoperator.def} Ac= \Big(\sum_{j'\in \mathbb{Z}}
a(j,j') c(j')\Big)_{j\in \mathbb{Z}}\end{equation}
 where $c=(c(j))_{j\in
\mathbb{Z}}\in \ell^p$.
Given a summable sequence $h=(h(j))_{j\in \mathbb{Z}}\in \ell^1$,
define the {\em convolution operator} $C_h$ on $\ell^p, 1\le p\le
\infty$, by
\begin{equation}\label{convolution.def}
 C_h:\ \ell^p\ni \big(b(j)\big)_{j\in {\mathbb Z}} %=:b
\longmapsto %C_ab:=
\Big(\sum_{k\in {\mathbb Z}} h(j-k) b(k)\Big)_{j\in {\mathbb
Z}}\in \ell^p.\end{equation}
 Observe that  the linear operator
associated with an infinite matrix $A\in {\mathcal C}$ is
dominated by a convolution operator in the sense that
\begin{equation}\label{dm.def}
|(Ac)(j)|\le  (C_h|c|)(j):= \sum_{j'\in \mathbb{Z}} h(j-j')
|c(j')|,\quad \ j\in \mathbb{Z}
\end{equation}
 for any sequence $c=(c(j))_{j\in \mathbb{Z}}\in \ell^p, 1\le p\le \infty$,
 where $|c|=(|c(j)|)_{j\in \mathbb{Z}}$ and the sequence  $(\sup_{j-j'=k}
  |a(j,j')|)_{k\in \mathbb{Z}}$ can be chosen to be the
 sequence $h=(h(j))_{j\in \mathbb{Z}}$ in  \eqref{dm.def}.
 So  infinite matrices in the
set ${\mathcal C}$ are said to be {\em convolution-dominated}.

\bigskip
Convolution-dominated infinite matrices were introduced by
Gohberg, Kaashoek, and Woerdeman \cite{gohberg89} as a
generalization of Toeplitz matrices. They showed that the class
${\mathcal C}$ equipped with the standard matrix multiplication
and
 the above norm $\|\cdot\|_{\mathcal C}$
 is an inverse-closed Banach subalgebra of ${\mathcal
B}(\ell^p)$ for $p=2$. Here ${\mathcal B}(\ell^p), 1\le p\le
\infty$, is the space of all bounded linear operators on $\ell^p$
with the standard operator norm, and a subalgebra ${\mathcal A}$
of a Banach algebra ${\mathcal B}$ is said to be {\em
inverse-closed} if  an operator $T\in {\mathcal A}$ has  an
inverse $T^{-1}$ in $\mathcal B$ then  $T^{-1}\in {\mathcal A}$
(\cite{connes, gelfand, naimark}). The inverse-closed property for
convolution-dominated infinite matrices was rediscovered by
Sj\"ostrand \cite{sjostrand2} with a completely different proof
and an application to a deep theorem about pseudodifferential
operators. Recently Shin and Sun \cite{shinsun09} generalized
Gohberg, Kaashoek and Woerdeman's result and proved that the class
${\mathcal C}$ is an inverse-closed Banach subalgebra of
${\mathcal B}(\ell^p)$ for any $1\le p\le \infty$. The readers may
refer to
  \cite{baskakov, fendlergrochenigappear,   kurbatov90, shinsun09, sjostrand2,  suntams}
   and  the references therein
   for related results and various generalizations on the inverse-closed property for
convolution-dominated infinite matrices.

\bigskip

Convolution-dominated infinite matrices
arise and have been used in the study of spline approximation
(\cite{deboor76,demko}), wavelets and affine frames
(\cite{chuihestockler04b, jaffard}),
 Gabor frames and non-uniform sampling (\cite{balanchl04, grochenigl03,
 grochenigl06,
% grocheniglappear,
 sunsiam}),  and pseudo-differential operators
 (\cite{grochenig06,
grochenigstrohmer, sjostrand, sjostrand2} and the references therein).
 Examples of convolution-dominated infinite matrices
 include the infinite matrix $\big(a(j-j')\big)_{j, j'\in
\mathbb{Z}}$ associated with convolution operators, and the
infinite matrix $\big(a(j-j') e^{-2\pi \sqrt{-1}\theta
j'(j-j')}\big)_{i,j\in\mathbb{Z}}$ associated with  twisted
convolution operators, where $\theta\in \mathbb{R}$ and the
sequence $a=(a(j))_{j\in \mathbb{Z}}$ satisfies $\sum_{j\in
\mathbb{Z}} |a(j)|<\infty$ (\cite{akramgrochenig01, grochenigl03,
jiamicchelli,
 suntams, wiener}).

\bigskip

A convolution-dominated infinite matrix $A$  is said to have {\em
$\ell^p$-stability} if there are two positive constants $C_1$ and
$C_2$ such that
\begin{equation}\label{lpstability.def}
C_1\|c\|_p\le \|Ac\|_p\le C_2\|c\|_p \quad {\rm for \ all} \ c\in
\ell^p.
\end{equation}
The $\ell^p$-stability  is one of basic assumptions for infinite
matrices arisen in the study of spline approximation,  Gabor
time-frequency analysis, nonuniform sampling, and algebra of
pseudo-differential operators, see \cite{akramgrochenig01,
balanchl04, chuihestockler04b, deboor76, demko,
fendlergrochenigappear, grochenigl03,
 grochenigl06, grochenigstrohmer,
jaffard,  jiamicchelli, shinsun09, sjostrand, sjostrand2, sunsiam,
suntams, wiener}
 and the references therein.
{\bf Practical criteria} for the $\ell^p$-stability of a
convolution-dominated infinite matrix will play important roles in
the further study of those topics.

\bigskip
However, up to the knowledge of the author, little is known about
practical criteria for the $\ell^p$-stability of an infinite
matrix. For an infinite matrix $A=(a(j-j'))_{j, j'\in \mathbb{Z}}$
associated with convolution operators, there is a very useful
criterion for its $\ell^p$-stability. It states that $A$ has
$\ell^p$-stability if and only if
 the Fourier series $\hat
a(\xi):=\sum_{j\in {\mathbb Z}} a(j) e^{-ij\xi}$ of the generating
sequence $a=(a(j))_{j\in \mathbb{Z}}\in \ell^1$ does not vanish on
the real line, i.e.,
\begin{equation}\label{hata.chara}
\hat a(\xi)\ne 0  \quad {\rm for \ all} \ \ \xi\in {\mathbb R}.
\end{equation}
 Applying this criterion for the $\ell^p$-stability,
% of infinite matrices in the commutative Banach algebra ${\mathcal W}$,
one concludes that the spectrum $\sigma_p(C_a)$ of the convolution
operator $C_a$
 as an operator on $\ell^p$
is independent of $1\le p\le \infty$, i.e.,
\begin{equation}
\sigma_p(C_a)=\sigma_q(C_a)\quad {\rm for\ all} \ 1\le p, q\le
\infty
\end{equation}
see \cite{barnes90, hulanicki, pytlik, shinsun09} and the references
therein for the discussion on spectrum of various  convolution
operators. Applying the above criterion again, together with  the
classical Wiener's lemma (\cite{wiener}), it follows  that the inverse
of an $\ell^p$-stable convolution operator $C_a$ is  a convolution
operator $C_b$ associated with another summable sequence $b$.

\bigskip
For a  convolution-dominated infinite matrix $A=(a(j,j'))_{j,j'\in
\mathbb{Z}}$, a popular sufficient condition for its
$\ell^1$-stability and $\ell^\infty$-stability is  that $A$ is
{\em diagonal-dominated}, i.e.,
\begin{equation}\label{ddomi.def}
\inf_{j\in \mathbb{Z}}\Big( |a(j,j)|-\max \Big(\sum_{j'\ne
j}|a(j,j')|, \sum_{j'\ne j}|a(j',j)|\Big)\Big)>0.\end{equation} In
this paper, we provide a practical criterion for the
$\ell^p$-stability of convolution-dominated infinite matrices. We
 show that a convolution-dominated infinite matrix $A$ has
$\ell^p$-stability if and only if it has certain
``diagonal-blocks-dominated" property (see Theorem
\ref{criterion.tm} for  the precise statement).

\section{Main Theorem}

To state our criterion for the $\ell^p$-stability of
convolution-dominated infinite matrices, we introduce two
concepts. Given an infinite matrix $A$, define the {\em truncation
matrices} $A_s, s\ge 0$, by
$$A_s=(a(i,j)\chi_{(-s, s)}(i-j))_{i,j\in \mathbb{Z}}$$
where $\chi_E$ is the characteristic function on a set $E$. Given
$y\in \mathbb{R}$ and $1\le N\in \mathbb{Z}$, define the operator
$\chi_y^N$ on $\ell^p$ by
 $$ \chi_y^N: \ell^p\ni \big(c(j)\big)_{j\in \mathbb{Z}}\longmapsto
  \big(c(j)\chi_{(-N, N)}(j-y)\big)_{j\in \mathbb{Z}}\in \ell^p.
$$
The operator $\chi_y^N$ is a diagonal matrix ${\rm diag}
(\chi_{(-N, N)}(j-y))_{j\in \mathbb{Z}}$.

\begin{theorem} \label{criterion.tm}
Let $1\le p\le \infty$,  and $A$ be a convolution-dominated
infinite matrix in the class ${\mathcal C}$. Then the following
statements are equivalent.

\begin{itemize}
\item[{(i)}] The infinite matrix $A$ has $\ell^p$-stability.

\item[{(ii)}] There exist a positive constant $C_0$ and a positive
integer $N_0$ such that \begin{equation} \label{criterion.tm.eq1}
\|\chi_n^{2N} A\chi_n^Nc\|_p\ge C_0 \|\chi_n^N c\|_p, \quad \ c\in
\ell^p,\end{equation}
 hold for all integers $N\ge N_0$ and $n\in
N\mathbb{Z}$.

\item[{(iii)}] There exist a positive integer $N_0$ and a positive
constant $\alpha$ satisfying
\begin{equation}\label{criterion.tm.eq2}
\alpha> 2 (5+2^{1-p})^{1/p} \inf_{0\le s\le N_0}
\big(\|A-A_s\|_{\mathcal C}+ \frac{s}{N_0}\|A\|_{\mathcal C}\big)
\end{equation} such that \begin{equation}\label{criterion.tm.eq3}
\|\chi_n^{2N_0} A\chi_n^{N_0}c\|_p\ge \alpha  \|\chi_n^{N_0}
c\|_p,\quad \ c\in \ell^p,\end{equation} hold for all $n\in
N_0\mathbb{Z}$.
\end{itemize}
\end{theorem}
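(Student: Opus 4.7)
The plan is to prove the cycle (i) $\Rightarrow$ (ii) $\Rightarrow$ (iii) $\Rightarrow$ (i), with the last implication carrying the substantive content.

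I would handle the two easy implications first. For (i) $\Rightarrow$ (ii), assuming $\|Ac\|_p \geq C_1\|c\|_p$, apply this bound with $c$ replaced by $\chi_n^N c$ and split
\[ A\chi_n^N c = \chi_n^{2N} A\chi_n^N c + (I - \chi_n^{2N}) A\chi_n^N c. \]
The tail term involves only matrix entries $a(j,j')$ with $|j - n| \geq 2N$ and $|j' - n| < N$, hence $|j - j'| > N$; convolution domination bounds it by $\|A - A_N\|_{\mathcal C}\, \|\chi_n^N c\|_p$, and this tends to $0$ as $N \to \infty$ because $A \in \mathcal C$. Picking $N_0$ so the tail is at most $C_1/2$ gives (ii) with $C_0 = C_1/2$. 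For (ii) $\Rightarrow$ (iii), note that the quantity $\inf_{0 \leq s \leq N_0}(\|A - A_s\|_{\mathcal C} + \tfrac{s}{N_0}\|A\|_{\mathcal C})$ tends to $0$ as $N_0 \to \infty$ (take $s = \lceil \sqrt{N_0}\,\rceil$, so both summands vanish); enlarging $N_0$ until this quantity drops below $C_0 / (2(5 + 2^{1 - p})^{1/p})$ lets us set $\alpha = C_0$ and verify (iii).

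For (iii) $\Rightarrow$ (i) the upper estimate $\|Ac\|_p \leq \|A\|_{\mathcal C}\|c\|_p$ is immediate from convolution domination. For the lower estimate, I would raise (iii) to the $p$-th power and sum over $n \in N_0 \mathbb{Z}$; since the windows $\chi_n^{N_0}$ cover $\mathbb{Z}$, the left side dominates $\alpha^p \|c\|_p^p$. On the right, decompose
\[ \chi_n^{2N_0} A \chi_n^{N_0} c = \chi_n^{2N_0} Ac \;-\; \chi_n^{2N_0} A(1 - \chi_n^{N_0}) c \]
and further write $A = A_s + (A - A_s)$ for the optimal $s \in [0, N_0]$. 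The first piece sums to $\leq 4 \|Ac\|_p^p$ by window multiplicity. The $(A - A_s)$ pieces sum to $O(\|A - A_s\|_{\mathcal C}^p \|c\|_p^p)$ by convolution domination applied globally, both to $(A - A_s) c$ and to $(A - A_s)\chi_n^{N_0} c$. The delicate term $\sum_n \|\chi_n^{2N_0} A_s(1 - \chi_n^{N_0}) c\|_p^p$ is controlled by finite propagation: since $A_s$ has range $\leq s$, input supported in $\{j' : |j' - n| \geq N_0\}$ produces output supported in $\{j : |j - n| \geq N_0 - s\}$, so a pointwise convolution bound followed by interchange of the sum over $n$ with the matrix-entry sum yields $O((s/N_0)^p \|A\|_{\mathcal C}^p \|c\|_p^p)$. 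Collecting everything produces
\[ \alpha^p \|c\|_p^p \leq 4 \cdot 2^{p-1} \|Ac\|_p^p + 2^{p-1}(5 + 2^{1 - p}) \bigl(\|A - A_s\|_{\mathcal C} + \tfrac{s}{N_0}\|A\|_{\mathcal C}\bigr)^p \|c\|_p^p, \]
and the strict inequality on $\alpha$ in (iii) is calibrated exactly so that the $\|c\|_p^p$ term absorbs into the left side, giving $\ell^p$-stability.

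The principal obstacle is the boundary-annulus estimate for the sum $\sum_n \|\chi_n^{2N_0} A_s(1 - \chi_n^{N_0}) c\|_p^p$: one must show that the leakage of a range-$s$ operator across window boundaries of size $N_0$ costs only a factor $s/N_0$ per window after summing $p$-th powers, and tracking the constants carefully is what produces the specific coefficient $(5 + 2^{1 - p})^{1/p}$ appearing in the hypothesis on $\alpha$. The case $p = \infty$ requires replacing sums by suprema throughout, but the geometric decomposition is unchanged.
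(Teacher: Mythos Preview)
Your treatment of (i)$\Rightarrow$(ii) and (ii)$\Rightarrow$(iii) is fine and matches the paper.  The gap is in the ``delicate term'' of (iii)$\Rightarrow$(i): the claimed bound
\[
\sum_{n\in N_0\Z}\|\chi_n^{2N_0}A_s(1-\chi_n^{N_0})c\|_p^p \;=\; O\!\Big(\big(\tfrac{s}{N_0}\big)^p\|A\|_{\mathcal C}^p\,\|c\|_p^p\Big)
\]
is false with sharp cut-offs.  Take $A=I$, so $A_s=I$ for any $s>0$; then $\chi_n^{2N_0}A_s(1-\chi_n^{N_0})c$ is simply $c$ restricted to the annulus $\{j:N_0\le |j-n|<2N_0\}$.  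Each integer $j$ lies in exactly two such annuli as $n$ runs over $N_0\Z$, so the left side equals $2\|c\|_p^p$, independent of $N_0$ and certainly not $O((s/N_0)^p)$.  More generally, your ``interchange of the sum over $n$ with the matrix-entry sum'' can at best count, for each output index $j$, the number of $n$ placing $j$ in the boundary layer $N_0-s\le|j-n|<2N_0$; that count is $O(1)$, not $O(s/N_0)$.

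The paper resolves this by replacing the sharp windows $\chi_n^{N_0}$ with Lipschitz multipliers $\Psi_n^{N_0}c=\big(\psi_0((\lambda-n)/N_0)\,c(\lambda)\big)$ built from a trapezoidal bump $\psi_0$.  The point is that the commutator $\Psi_n^{N_0}A_s-A_s\Psi_n^{N_0}$ has entries
\[
\big(\psi_0(\tfrac{j-n}{N_0})-\psi_0(\tfrac{j'-n}{N_0})\big)a_s(j,j'),
\]
and for $|j-j'|<s$ the Lipschitz bound on $\psi_0$ makes this at most $(2s/N_0)|a_s(j,j')|$, so $\|[\Psi_n^{N_0},A_s]\|_{\mathcal C}\le 2(s/N_0)\|A\|_{\mathcal C}$.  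This is where the factor $s/N_0$ actually comes from; it is a smoothness phenomenon, not a support-counting one.  The commutator is then localized to $\Psi_n^{4N_0}c$, and the constant $(5+2^{1-p})^{1/p}$ arises from the overlap bound $\sum_n\|\Psi_n^{4N_0}c\|_p^p\le(5+2^{1-p})\|c\|_p^p$.  Your decomposition can be salvaged by swapping $\chi_n^{N_0}$ for $\Psi_n^{N_0}$ at the step where you invoke (iii) (noting $\chi_n^{N_0}\Psi_n^{N_0}=\Psi_n^{N_0}$), but as written the argument does not close.
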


Taking $N_0=1$ in \eqref{criterion.tm.eq2} and
\eqref{criterion.tm.eq3}, we obtain  a sufficient condition
\eqref{wddomi.def}, which is a strong version of the
diagonal-domination condition \eqref{ddomi.def}, for  the
$\ell^\infty$-stability of a convolution-dominated infinite
matrix.

\begin{corollary}
Let  $A=(a(j,j'))_{j,j'\in \mathbb{Z}}$ be a convolution-dominated
infinite matrix in the class ${\mathcal C}$. If
\begin{equation}\label{wddomi.def}\inf_{j\in \mathbb{Z}} |a(j,j)|-2\sum_{0\ne k\in
\mathbb{Z}} \sup_{j-j'=k} |a(j, j')|>0,
\end{equation}
then $A$ has $\ell^\infty$-stability.
\end{corollary}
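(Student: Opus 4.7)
The plan is to invoke Theorem \ref{criterion.tm} in the direction (iii)$\Rightarrow$(i) with $p=\infty$ and $N_0=1$, and take $\alpha := \inf_{j\in\mathbb{Z}}|a(j,j)|$. The corollary then reduces to checking both \eqref{criterion.tm.eq2} and \eqref{criterion.tm.eq3} for this choice.

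First I would verify \eqref{criterion.tm.eq3}. With $N_0=1$, the operator $\chi_n^1$ is the coordinate projection onto position $j=n$, since $\chi_{(-1,1)}(j-n)=1$ iff $j=n$. Hence $\|\chi_n^1 c\|_\infty = |c(n)|$, and the $j$-th entry of $A\chi_n^1 c$ equals $a(j,n)c(n)$. Restricting to $j\in\{n-1,n,n+1\}$ via $\chi_n^2$ and taking the sup-norm gives
$$\|\chi_n^2 A\chi_n^1 c\|_\infty \ge |a(n,n)|\,|c(n)| \ge \alpha\,\|\chi_n^1 c\|_\infty$$
uniformly in $n\in\mathbb{Z}=N_0\mathbb{Z}$, which is \eqref{criterion.tm.eq3}.

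Next I would verify \eqref{criterion.tm.eq2}. For any $s\in(0,1]$, $\chi_{(-s,s)}(i-j)=1$ iff $i=j$, so $A_s$ coincides with the diagonal part of $A$ and
$$\|A-A_s\|_{\mathcal C} = \sum_{0\ne k\in\mathbb{Z}}\sup_{j-j'=k}|a(j,j')|.$$
Letting $s\to 0^+$ in $\|A-A_s\|_{\mathcal C}+s\|A\|_{\mathcal C}$ then yields
$$\inf_{0\le s\le 1}\bigl(\|A-A_s\|_{\mathcal C}+s\|A\|_{\mathcal C}\bigr) \le \sum_{0\ne k\in\mathbb{Z}}\sup_{j-j'=k}|a(j,j')|.$$
At $p=\infty$ the prefactor $2(5+2^{1-p})^{1/p}$ collapses to $2$, so condition \eqref{criterion.tm.eq2} becomes exactly the hypothesis \eqref{wddomi.def}. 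Theorem \ref{criterion.tm} then delivers the $\ell^\infty$-stability of $A$.

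There is no real obstacle here; the argument is bookkeeping once the right parameters are fixed. The only subtlety is interpretational: one must recognize that the infimum in \eqref{criterion.tm.eq2} is approached as $s\to 0^+$ rather than attained at $s=0$ (where $A_0=0$ gives the strictly worse bound $\|A\|_{\mathcal C}$), and that the prefactor $2(5+2^{1-p})^{1/p}$ must be read as $2$ at $p=\infty$. With these two conventions, \eqref{criterion.tm.eq3} is nothing more than a trivial lower bound by the diagonal entry and \eqref{criterion.tm.eq2} reproduces \eqref{wddomi.def} verbatim.
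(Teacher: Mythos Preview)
Your proof is correct and follows exactly the approach the paper indicates: it states the corollary follows by ``taking $N_0=1$ in \eqref{criterion.tm.eq2} and \eqref{criterion.tm.eq3}'' without spelling out the details, and you have filled those in accurately, including the two interpretational points about the limit $s\to 0^+$ and the prefactor at $p=\infty$.
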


We say that an infinite matrix $A=(a(i,j))_{i,j\in \mathbb{Z}}$ is
a {\em band matrix} if $a(i,j)=0$ for all $i,j\in \mathbb{Z}$
satisfying $j>i-k$ or $j<i+k$.  The quantity $2k+1$ is the {\em
bandwidth} of the  matrix $A$. For a band matrix $A$ with
bandwidth $2k+1$, $A-A_s$ is the zero matrix if $s>k$. Therefore
for $N>k$,
$$\inf_{0\le
s\le N} \|A-A_s\|_{\mathcal C}+\frac{s}{N}\|A\|_{\mathcal C}\le
\frac{ k}{N} \|A\|_{\mathcal C}.$$ This, together with Theorem
\ref{criterion.tm}, gives the following sufficient condition for a
band matrix to have $\ell^p$-stability.

\begin{corollary}\label{bandmatrix.cr}
Let $1\le p\le \infty$ and  $A$ be a convolution-dominated band
matrix in the
 class ${\mathcal C}$ with bandwidth $2k+1$. If there
exists an integer $N_0>k$ such that
\begin{equation}\label{bandmatrix.cr.eq1} \|A\chi_n^{N_0}c\|_p\ge \alpha
\|\chi_n^{N_0} c\|_p,\quad \ c\in \ell^p,\end{equation} holds for
some constant $\alpha$ strictly larger than $2 (5+2^{1-p})^{1/p}
k\|A\|_{\mathcal C}/N_0$, then $A$ has $\ell^p$-stability.
\end{corollary}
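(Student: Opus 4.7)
The plan is to reduce the corollary directly to the implication (iii)$\,\Rightarrow\,$(i) of Theorem~\ref{criterion.tm}. Two routine verifications are required: the hypothesis \eqref{bandmatrix.cr.eq1} must be recognised as the lower bound \eqref{criterion.tm.eq3}, and the size hypothesis on $\alpha$ must be converted into \eqref{criterion.tm.eq2}.

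First I would check the support identity $\chi_n^{2N_0} A \chi_n^{N_0} c = A \chi_n^{N_0} c$ for every $c \in \ell^p$ and every $n \in N_0\mathbb{Z}$. The sequence $\chi_n^{N_0} c$ is supported in $(n - N_0, n + N_0)$, and the bandwidth assumption says that $a(i,j)\ne 0$ forces $|i-j|\le k$, so $A$ shifts supports by at most $k$. Because $N_0 > k$, and hence $N_0 + k < 2N_0$, the sequence $A\chi_n^{N_0} c$ is already supported in $(n - 2N_0, n + 2N_0)$, so the outer cutoff $\chi_n^{2N_0}$ acts as the identity on it. Consequently the assumption \eqref{bandmatrix.cr.eq1} is literally \eqref{criterion.tm.eq3}.

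Next I would invoke the observation recorded in the paragraph preceding the corollary: for a band matrix of bandwidth $2k+1$, the truncation satisfies $A_s = A$ whenever $s > k$, and therefore
\begin{equation*}
\inf_{0 \le s \le N_0} \Big( \|A - A_s\|_{\mathcal C} + \frac{s}{N_0}\|A\|_{\mathcal C} \Big) \le \frac{k}{N_0}\|A\|_{\mathcal C},
\end{equation*}
the bound on the right being approached by taking $s = k + \epsilon$ and letting $\epsilon \downarrow 0$. The strict inequality assumed on $\alpha$, namely $\alpha > 2(5 + 2^{1-p})^{1/p} k\|A\|_{\mathcal C}/N_0$, combined with this estimate, immediately yields the required inequality \eqref{criterion.tm.eq2}.

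With both clauses of Theorem~\ref{criterion.tm}(iii) verified for the given $N_0$ and $\alpha$, applying (iii)$\,\Rightarrow\,$(i) finishes the proof. This is a clean reduction and I do not expect any serious obstacle; the only mild subtlety is the strict inequality $N_0 > k$, which is exactly what makes the outer cutoff transparent in the first step and what lets the infimum in the second step be bounded by $k\|A\|_{\mathcal C}/N_0$.
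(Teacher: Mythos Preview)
Your proposal is correct and follows the same route as the paper: bound the infimum in \eqref{criterion.tm.eq2} by $k\|A\|_{\mathcal C}/N_0$ using the band structure, then invoke (iii)$\Rightarrow$(i) of Theorem~\ref{criterion.tm}. You in fact supply one detail the paper leaves tacit, namely the support identity $\chi_n^{2N_0} A\chi_n^{N_0}c = A\chi_n^{N_0}c$ that reconciles the hypothesis \eqref{bandmatrix.cr.eq1} (stated without the outer cutoff) with condition \eqref{criterion.tm.eq3}.
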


If we further assume that  the infinite matrix $A$ in Corollary
\ref{bandmatrix.cr} has the form  $A=(a(j-j'))_{j,j'\in
\mathbb{Z}}$ for some finite sequence $a=(a(j))_{j\in \mathbb{Z}}$
satisfying $a(j)=0$ for $|j|>k$, then $\|A\|_{\mathcal
C}=\sum_{|j|\le k} |a(j)|$ and the condition
\eqref{bandmatrix.cr.eq1} can reformulated as follows:
\begin{equation}\label{bandmatrix.eq2}
\|\tilde A_{N_0} c\|_p\ge \frac{\gamma k}{N_0} \Big(\sum_{|j|\le
k} |a(j)|\Big) \|c\|_p, \quad c\in \mathbb{R}^{2N_0+1},
\end{equation}  holds for some $\gamma>2 (5+2^{1-p})^{1/p}$, where
\begin{equation}\label{bandmatrix.eq3}\tilde A_{N_0}=( a(j-j'))_{-N_0-k\le j\le
N_0+k,-N_0\le j'\le N_0}\end{equation} and
$$\|c\|_p=\left\{\begin{array}{ll}
(\sum_{j=-k_1}^{k_2}|c(j)|^p)^{1/p} & {\rm if}\ 1\le p<\infty\\
\sup_{-k_1\le j\le k_2} |c(j)| & {\rm if}\
p=\infty,\end{array}\right.$$
 for $c=(c(-k_1),
\cdots, c(0), \ldots, c(k_2))^T\in \mathbb{R}^{k_1+k_2+1}$. As a
conclusion from \eqref{bandmatrix.eq2} and \eqref{bandmatrix.eq3},
we see that
if $A=(a(j-j'))_{j,j'\in \mathbb{Z}}$ does not have
$\ell^p$-stability, then for any large integer $N$,
\begin{equation}\label{minimal.eq} \inf_{0\ne c\in
\mathbb{R}^{2N+1}} \frac{\|\tilde A_{N} c\|_p}{\|c\|_p} \le\frac{
 2 (5+2^{1-p})^{1/p} k}{N} \Big(\sum_{|j|\le k} |a(j)|\Big).
\end{equation} For the special case $p=2$, the above inequality
\eqref{minimal.eq} can be interpreted as that the minimal
eigenvalue of $(\tilde A_N)^T\tilde A_N$ is less than or equal to
$\frac{ \sqrt{22} k^2}{N^2} \big(\sum_{|j|\le k} |a(j)|\big)^2,$
and it can also be rewritten as
\begin{equation}\inf_{0\ne P_N\in \Pi_N}
\frac{\Big(\int_{-\pi}^\pi |\hat a(\xi)|^2 |P_N(\xi)|^2
d\xi\Big)^{1/2}} { \Big(\int_{-\pi}^\pi |P_N(\xi)|^2
d\xi\Big)^{1/2}}
 \le \frac{ \sqrt{22} k}{N}
\Big(\sum_{|j|\le k} |a(j)|\Big),
\end{equation}
where $\hat a(\xi)=\sum_{j\in\mathbb{Z}} a(j) e^{-ij\xi}$ and
$\Pi_N$ is the set of all trigonometrical polynomial of degree at
most $N$.

If the sequence $a=(a(j))_{j\in \mathbb{Z}}$ satisfies $a(0)=1,
a(-1)=-1$, and $a(j)=0$ otherwise, then the bandwidth of the
infinite matrix $A=(a(j-j'))_{j,j'\in \mathbb{Z}}$ is equal to 1,
the  norm $\|A\|_{\mathcal C}$ of the associated infinite matrix
$A$ is equal to 2,
\begin{equation}
\tilde A_N=\left (\begin{array} {cccccc} -1 & 0 & 0 & \cdots & 0 & 0\\
1 & -1 & 0 & \cdots& 0 & 0\\
0 & 1 & -1& \cdots & 0 &0\\
\vdots & \vdots & \ddots & \ddots & \vdots & \vdots\\
\vdots & \vdots & \ddots & \ddots & \ddots & \vdots\\
0 & 0 & 0 & \cdots& 1 & -1\\
0 & 0 & 0 & \cdots& 0 & 1\end{array}\right),\end{equation} and
$$\inf_{0\ne c\in \mathbb{R}^{2N+1}}
\frac{\|\tilde A_{N} c\|_p}{\|c\|_p}\ge \frac{1}{N+1},
$$
 where the last inequality holds since
the matrix
$$\tilde B_N:= \left (\begin{array} {ccccccccccc} -1 & 0 & 0 & \cdots & 0 & 0 & 0 &0 & \cdots & 0 & 0 \\
-1 & -1 & 0 & \cdots& 0 & 0 & 0 & 0 & \cdots & 0 & 0 \\
-1 & -1 & -1& \cdots & 0 &0 & 0 & 0 & \cdots & 0 & 0 \\
\vdots & \vdots & \vdots & \ddots & \vdots & \vdots & \vdots & \vdots & \ddots & \vdots & \vdots \\
-1 & -1 & -1 & \cdots& -1 & 0 & 0 & 0 & \cdots & 0 & 0 \\
0 & 0 & 0 & \cdots& 0 & 0 & 1 & 1 & \cdots & 1 & 1 \\
0 & 0 & 0 & \cdots& 0 & 0 & 0 & 1 & \cdots & 1 & 1\\
\vdots & \vdots & \vdots & \ddots & \vdots & \vdots & \vdots & \vdots & \ddots & \vdots & \vdots \\
0 & 0 & 0 & \cdots& 0 & 0 & 0 & 0 & \cdots & 1 &
1\\
0 & 0 & 0 & \cdots& 0 & 0 & 0 & 0 & \cdots & 0 &
1\\
\end{array}\right).$$ is a left inverse of the matrix $\tilde
A_N$. Therefore the order $N^{-1}$ in \eqref{minimal.eq} can not
be improved in general, but the author  believes that the  bound constant $2
(5+2^{1-p})^{1/p}$ in \eqref{criterion.tm.eq2} and
\eqref{minimal.eq} is not optimal and could be improved.

\section{Proof}

We say that  a discrete subset $\Lambda$ of $\mathbb{R}^d$  is
{\em relatively-separated}  if
\begin{equation}
R(\Lambda):=\sup_{x\in \mathbb{R}^d} \sum_{\lambda\in \Lambda}
\chi_{\lambda+[-1/2,1/2)^d}(x)<\infty
\end{equation}
(\cite{akramgrochenig01, shinsun09, suntams}).  Clearly, the set
$\mathbb{Z}$ of all integers is a relatively-separated subset of
$\mathbb{R}$ with
\begin{equation} \label{z.eq1} R(\mathbb{Z})=1.\end{equation}  Given a discrete set
$\Lambda$, let $\ell^p(\Lambda)$ be the set of all $p$-summable
sequences on the set $\Lambda$ with standard norm
$\|\cdot\|_{\ell^p(\Lambda)}$ or $\|\cdot\|_p$ for brevity.

 Given
two relatively-separated subsets $\Lambda$ and $\Lambda'$ of
$\mathbb{R}^d$,  define 
$${\mathcal C}(\Lambda, \Lambda')=\Big\{A:=\big( a(\lambda,
\lambda')\big)_{\lambda\in \Lambda, \lambda'\in \Lambda'}\Big|\
\|A\|_{{\mathcal C} (\Lambda, \Lambda')}<\infty\Big\},$$ where
\begin{eqnarray*}\|A\|_{{\mathcal C} (\Lambda, \Lambda')} & = & \sum_{k\in \mathbb{Z}^d}
\sup_{\lambda\in \Lambda, \lambda'\in \Lambda'} |a(\lambda,
\lambda')| \chi_{k+[-1/2,1/2]^d}
(\lambda-\lambda').\end{eqnarray*} It is obvious that
\begin{equation}\label{z.eq2}
{\mathcal C}(\mathbb{Z}, \mathbb{Z})= {\mathcal
C}.\end{equation}
 Given an infinite matrix
$A=(a(\lambda, \lambda'))_{\lambda\in \Lambda, \lambda'\in
\Lambda'}$,  define its {\em truncation matrices} $A_s, s\ge 0$,
by
$$ A_s=\Big(a(\lambda, \lambda') \chi_{(-s, s)^d}(\lambda-\lambda')\Big)_{\lambda\in \Lambda, \lambda'\in
\Lambda'}.$$ For any $y\in \mathbb{R}^d$ and a positive  integer
$N$, define the operator $\chi_y^N$ on $\ell^p(\Lambda)$ by
\begin{equation}
\chi_n^N: \ \ell^p(\Lambda)\ni \big(c(\lambda)\big)_{\lambda\in
\Lambda}\longmapsto \big(c(\lambda) \chi_{(-N,
N)^d}(\lambda-y)\big)_{\lambda\in \Lambda}\in \ell^p(\Lambda).
\end{equation}

In this section, we  establish the following criterion for the
$\ell^p$-stability of infinite matrices in the class ${\mathcal
C}(\Lambda, \Lambda')$,  which is a slight generalization of
Theorem \ref{criterion.tm} by \eqref{z.eq1} and \eqref{z.eq2}.

\begin{theorem}
\label{criterion.tm2} Let $1\le p\le \infty$, the subsets
$\Lambda, \Lambda'$ of $\mathbb{R}^d$ be relatively-separated, and
the infinite matrix $A$ belong to    ${\mathcal C}(\Lambda,
\Lambda')$. Then the following statements are equivalent to each
other:

\begin{itemize}
\item[{(i)}] The infinite matrix $A$ has $\ell^p$-stability, i.e.,
there exist positive constants $C_1$ and $C_2$ such that
\begin{equation}
\label{criterion.tm2.eq1} C_1\|c\|_{\ell^p(\Lambda')} \le
\|Ac\|_{\ell^p(\Lambda)} \le C_2 \|c\|_{\ell^p(\Lambda')} \quad
 {\rm for \ all} \ c\in \ell^p(\Lambda').
\end{equation}

\item[{(ii)}] There exist a positive constant $C_0$ and a positive
integer $N_0$ such that \begin{equation} \label{criterion.tm2.eq2}
\|\chi_n^{2N} A\chi_n^Nc\|_{\ell^p(\Lambda)}\ge C_0 \|\chi_n^N
c\|_{\ell^p(\Lambda')} \quad {\rm for \ all} \  \ c\in
\ell^p(\Lambda'),\end{equation} where $N_0\le N\in \mathbb{Z}$ and
$n\in N\mathbb{Z}^d$.

\item[{(iii)}] There exist a positive  integer $N_0$ and a
positive constant $\alpha$ satisfying
\begin{equation}\label{criterion.tm2.eq3}
\alpha> 2 (5+2^{1-p})^{d/p} R(\Lambda)^{1/p} R(\Lambda')^{1-1/p}
\inf_{0\le s\le N_0} \Big(\|A-A_s\|_{{\mathcal C}(\Lambda,
\Lambda')}+ \frac{d s}{N_0}\|A\|_{{\mathcal C}(\Lambda,
\Lambda')}\Big)
\end{equation} such that \begin{equation}\label{criterion.tm2.eq4}
\|\chi_n^{2N_0} A\chi_n^{N_0}c\|_{\ell^p(\Lambda)}\ge \alpha
\|\chi_n^{N_0} c\|_{\ell^p(\Lambda')}\end{equation} hold for all $
c\in \ell^p(\Lambda')$ and $n\in N_0\mathbb{Z}$.
\end{itemize}
\end{theorem}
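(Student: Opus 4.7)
The plan is to prove the cycle (i) $\Rightarrow$ (ii) $\Rightarrow$ (iii) $\Rightarrow$ (i); the final implication is the substantial one.

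For (i) $\Rightarrow$ (ii), I would apply the stability inequality to $\chi_n^N c$ in place of $c$ and then replace $A\chi_n^N c$ by $\chi_n^{2N} A \chi_n^N c$. Any nonzero entry of $(I - \chi_n^{2N}) A \chi_n^N$ requires $|\lambda - n|_\infty \ge 2N$ and $|\lambda' - n|_\infty < N$, which forces $|\lambda - \lambda'|_\infty \ge N$; the discarded contribution is therefore controlled by $\|A - A_N\|_{{\mathcal C}(\Lambda, \Lambda')}$. Since this tends to $0$ as $N \to \infty$ (by absolute summability of the defining series), choosing $N_0$ so that $\|A - A_{N_0}\|_{{\mathcal C}} < C_1/2$ yields (ii) with $C_0 = C_1/2$.

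For (ii) $\Rightarrow$ (iii), given $C_0$ and $N_0'$ from (ii), I would take $N_0 \ge N_0'$ and choose an auxiliary truncation parameter $s = s(N_0) \le N_0$, for instance $s = \lfloor \sqrt{N_0}\rfloor$, so that both $\|A - A_s\|_{{\mathcal C}}$ and $\tfrac{ds}{N_0}\|A\|_{{\mathcal C}}$ tend to $0$ as $N_0 \to \infty$. For $N_0$ sufficiently large the right-hand side of \eqref{criterion.tm2.eq3} drops strictly below $C_0$, and (iii) then holds with $\alpha = C_0$ applied at $N = N_0$.

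The substantive direction is (iii) $\Rightarrow$ (i). The strategy is to use the cubes $n + (-N_0, N_0)^d$, $n \in N_0 \mathbb{Z}^d$, as an overlapping cover of $\mathbb{R}^d$, noting that each $\lambda' \in \Lambda'$ lies in at least one and at most $2^d$ such cubes. Apply \eqref{criterion.tm2.eq4} at each $n$, raise to the $p$-th power, and sum over $n$ (with sums replaced by suprema when $p = \infty$). The left-hand side then lower-bounds a constant multiple of $\alpha^p \|c\|_{\ell^p(\Lambda')}^p$, the constant involving $R(\Lambda')$. To upper-bound the summed right-hand side by $\|Ac\|_{\ell^p(\Lambda)}^p$ plus an absorbable error, I would decompose
\[
\chi_n^{2N_0} A \chi_n^{N_0} c = \chi_n^{2N_0} A c - \chi_n^{2N_0} A (I - \chi_n^{N_0}) c,
\]
split $A = A_s + (A - A_s)$, and invoke $(a+b)^p \le 2^{p-1}(a^p + b^p)$. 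The main piece $\sum_n \|\chi_n^{2N_0} Ac\|_p^p$ is controlled by the multiplicity of the cover by cubes of side $4N_0$ together with $R(\Lambda) \|Ac\|_p^p$; the tail piece involving $A - A_s$ yields $\|A - A_s\|_{{\mathcal C}}^p \|c\|_p^p$ up to combinatorial constants; and the band-limited piece $\chi_n^{2N_0} A_s (I - \chi_n^{N_0}) c$ is supported on the shell $N_0 - s \le |\lambda - n|_\infty < 2N_0$, so summing it across $n$ and grouping by band-distance $k < s$ shows that the "boundary sliver" of each cube on which $A_s$ can straddle the partition has volume fraction $\sim dk/N_0$, giving a total shell contribution at most $(\tfrac{ds}{N_0} \|A\|_{{\mathcal C}})^p \|c\|_p^p$. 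Absorbing the error (using the hypothesis on $\alpha$) yields $\|Ac\|_p \ge C \|c\|_p$ for some $C > 0$; the companion upper bound is immediate from $\|Ac\|_p \le \|A\|_{{\mathcal C}} \|c\|_p$.

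The main technical obstacle will be tracking all combinatorial factors sharply enough to recover the bound $2(5+2^{1-p})^{d/p} R(\Lambda)^{1/p} R(\Lambda')^{1-1/p}$: the exponent $d/p$ and the constant $5 + 2^{1-p}$ are produced by the multiplicities of the coverings by cubes of side $2N_0$ and $4N_0$ combined with the sharp form of $(a+b)^p \le 2^{p-1}(a^p + b^p)$; the factors $R(\Lambda)$ and $R(\Lambda')^{1-1/p}$ enter when reducing sums indexed by relatively-separated sets to sums over $\mathbb{Z}^d$; and the leading $2$ reflects balancing the two error terms (the $\|A - A_s\|_{{\mathcal C}}$ tail and the boundary shell) against the given $\alpha$.
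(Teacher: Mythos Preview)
Your treatment of (i)$\Rightarrow$(ii) and (ii)$\Rightarrow$(iii) matches the paper's. The gap is in (iii)$\Rightarrow$(i), specifically in the ``boundary sliver'' estimate for the band-limited error term.

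With your decomposition, the piece $\chi_n^{2N_0} A_s (I - \chi_n^{N_0}) c$ is supported on the output side where $N_0 - s < |\lambda - n|_\infty < 2N_0$, as you say; but this region has width $N_0 + s$, not $s$. Grouping by band distance $k$ does not help: for the $k$-th diagonal the constraint is $|\lambda - n|_\infty < 2N_0$ together with $|\lambda' - n|_\infty \ge N_0$ for some $\lambda'$ with $|\lambda-\lambda'|\approx k$, which still allows $\lambda$ to range over a set of width $\sim N_0$, not $\sim k$. Consequently $\sum_n \|\chi_n^{2N_0} A_s (I-\chi_n^{N_0}) c\|_p^p$ is only bounded by a fixed constant times $\|A\|_{\mathcal C}^p \|c\|_p^p$, with no factor $(ds/N_0)^p$, and the error cannot be absorbed into $\alpha$. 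The difficulty is structural: $\chi_n^{2N_0} A (I-\chi_n^{N_0})$ is \emph{not} a commutator, and the mismatch between the outer window of side $4N_0$ and the inner one of side $2N_0$ leaves a thick transition region on which $A_s$ acts with full strength.

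The paper repairs this by replacing the sharp cutoff $\chi_n^{N_0}$ with a Lipschitz multiplication operator $\Psi_n^{N_0}$ built from a trapezoidal bump $\psi_0$ satisfying $\chi_{[-1/2,1/2]^d}\le \psi_0\le \chi_{(-1,1)^d}$ and $|\psi_0(x)-\psi_0(y)|\le 2d\|x-y\|_\infty$. One applies hypothesis (iii) to the vector $\Psi_n^{N_0} c$ (not to $c$), uses $\chi_n^{2N_0}A_{N_0}\Psi_n^{N_0}=A_{N_0}\Psi_n^{N_0}$ to drop the outer cutoff, and then passes from $A_{N_0}\Psi_n^{N_0}c$ to $\Psi_n^{N_0}A_{N_0}c$ via the genuine commutator estimate
\[
\|\Psi_n^{N_0}A_{N_0}-A_{N_0}\Psi_n^{N_0}\|_{{\mathcal C}(\Lambda,\Lambda')}\le \inf_{0\le s\le N_0}\Bigl(\|A_{N_0}-A_s\|_{{\mathcal C}(\Lambda,\Lambda')}+\frac{2ds}{N_0}\|A_s\|_{{\mathcal C}(\Lambda,\Lambda')}\Bigr),
\]
where the factor $s/N_0$ now arises honestly from the Lipschitz constant of $\psi_0(\cdot/N_0)$ acting on entries with $|\lambda-\lambda'|<s$. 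The summation over $n\in N_0\mathbb{Z}^d$ then uses $\|c\|_p\le\bigl(\sum_n\|\Psi_n^{N_0}c\|_p^p\bigr)^{1/p}\le 2^{d/p}\|c\|_p$ and its analogue for $\Psi_n^{4N_0}$ with upper constant $(5+2^{1-p})^{d/p}$; this, and not a cube-covering multiplicity for hard cutoffs, is the source of that particular constant in \eqref{criterion.tm2.eq3}.
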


Using the above theorem, we obtain the  following equivalence of
$\ell^p$-stability for infinite matrices having certain
off-diagonal decay, which is established  in  \cite{ald-sl,
tessera, shinsun09} for $\gamma>d(d+1), \gamma>0$,
 and $\gamma\ge
0$ respectively.

\begin{corollary}\label{stability.cr} Let $\Lambda, \Lambda'$ be
 relatively-separated subsets of $\mathbb{R}^d$, and
 $A=(a(\lambda, \lambda'))_{\lambda\in \Lambda, \lambda'\in
 \Lambda'}$  satisfy
 $$\|A\|_{{\mathcal C}_\gamma (\Lambda, \Lambda')}=
 \sum_{k\in \mathbb{Z}^d} (1+|k|)^\gamma \sup_{\lambda\in \Lambda,
 \lambda'\in \Lambda'} |a(\lambda,
 \lambda')|\chi_{k+[-1/2,1/2]^d}(\lambda-\lambda')<\infty$$
 where $\gamma>0$. Then  the $\ell^p$-stability of the infinite matrix
$A$ are equivalent to each other for different $1\le p\le \infty$.
\end{corollary}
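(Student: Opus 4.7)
The strategy is to use Theorem \ref{criterion.tm2} as a bridge: $\ell^p$-stability yields condition (ii) at $p$, which I shall translate into condition (iii) at a \emph{nearby} exponent $q$, thereby obtaining $\ell^q$-stability via (iii)$\Rightarrow$(i). The size of the admissible neighborhood of $q$ around $p$ is controlled by $\gamma$ and $d$, and finitely many such hops will cover all of $[1,\infty]$.

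\smallskip

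\textbf{From $\ell^p$ to $\ell^q$ on finite windows.} Starting from $\|\chi_n^{2N} A\chi_n^N c\|_{\ell^p(\Lambda)} \ge C_0 \|\chi_n^N c\|_{\ell^p(\Lambda')}$, valid for all $N \ge N_0$ and $n \in N\mathbb{Z}^d$, I observe that both sides involve sequences supported on subsets of $\Lambda$ and $\Lambda'$ of cardinality at most $R(\Lambda)(4N)^d$ and $R(\Lambda')(2N)^d$ respectively. The elementary finite-dimensional comparison $\|u\|_r \le m^{|1/r - 1/s|}\|u\|_s$ on $\mathbb{R}^m$ therefore converts this inequality into
\begin{equation*}
\|\chi_n^{2N} A\chi_n^N c\|_{\ell^q(\Lambda)} \;\ge\; C_0\, \kappa_{p,q}\, N^{-d|1/p - 1/q|}\, \|\chi_n^N c\|_{\ell^q(\Lambda')},
\end{equation*}
for all $N \ge N_0$, where $\kappa_{p,q}>0$ depends only on $p$, $q$, $R(\Lambda)$, and $R(\Lambda')$.

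\smallskip

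\textbf{Verifying (iii) at $q$.} Since $A\in \mathcal{C}_\gamma(\Lambda,\Lambda')$, the estimate
\begin{equation*}
\|A - A_s\|_{\mathcal{C}(\Lambda,\Lambda')} \;\le\; (1+s)^{-\gamma}\,\|A\|_{\mathcal{C}_\gamma(\Lambda,\Lambda')}
\end{equation*}
is immediate from the definition of the weighted norm. Choosing $s$ of order $N^{1/(1+\gamma)}$ in the infimum on the right of \eqref{criterion.tm2.eq3} bounds it by $C_{A,\gamma}\, N^{-\gamma/(1+\gamma)}$. Therefore, whenever $d|1/p - 1/q| < \gamma/(1+\gamma)$, one can take $N_0$ large enough so that $C_0 \kappa_{p,q} N_0^{-d|1/p-1/q|}$ strictly exceeds the right-hand side of \eqref{criterion.tm2.eq3} at exponent $q$. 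Theorem \ref{criterion.tm2}(iii)$\Rightarrow$(i) then delivers $\ell^q$-stability.

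\smallskip

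\textbf{Iteration and principal obstacle.} The one-step radius $\gamma/(d(1+\gamma))$ is a fixed positive constant (this is precisely where the hypothesis $\gamma>0$ is used), so the interval $\{1/q : q\in[1,\infty]\}=[0,1]$ can be covered by at most $\lceil d(1+\gamma)/\gamma \rceil$ hops starting from any initial $p$. The chief obstacle is balancing the polynomial loss $N^{-d|1/p-1/q|}$ from the finite-dimensional norm comparison against the polynomial tail $N^{-\gamma/(1+\gamma)}$ from the off-diagonal decay; requiring the former to dominate the latter as $N\to\infty$ is exactly what fixes the admissible step size in the iteration, and checking that the constants survive after finitely many steps is then routine bookkeeping.
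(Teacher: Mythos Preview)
Your proposal is correct and follows essentially the same route as the paper: obtain condition (ii) of Theorem~\ref{criterion.tm2} at $p$, use finite-dimensional norm comparison on the windows $\chi_n^N$, $\chi_n^{2N}$ to pass to a nearby $q$ at the cost of $N^{-d|1/p-1/q|}$, bound the truncation infimum in \eqref{criterion.tm2.eq3} by $O(N^{-\gamma/(1+\gamma)})$ via the choice $s\sim N^{1/(1+\gamma)}$, and then iterate in steps of size $\gamma/(d(1+\gamma))$ across $[0,1]$. The paper's proof differs only in making the constants $\kappa_{p,q}$ and $C_{A,\gamma}$ explicit.
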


\begin{proof}  Let $1\le p\le \infty$ and  $A$ have $\ell^p$-stability.
Then  by Theorem \ref{criterion.tm2} there exists a positive
constant $C_0$ and a positive integer $N_0$ such that
\begin{equation} \label{stability.cr.pf.eq0}
\|\chi_n^{2N} A\chi_n^Nc\|_{\ell^p(\Lambda)}\ge C_0 \|\chi_n^N
c\|_{\ell^p(\Lambda')} \quad {\rm for \ all} \  \ c\in
\ell^p(\Lambda'),\end{equation} where $N_0\le N\in \mathbb{Z}$ and
$n\in N\mathbb{Z}^d$. From the equivalence of different norms on a
finite-dimensional space, we have that
\begin{eqnarray*}
 & & ((2N)^d R(\Lambda))^{\min(1/q-1/p, 0)} \|\chi_n^N
c\|_{\ell^p(\Lambda)}   \le   \|\chi_n^N
c\|_{\ell^q(\Lambda)}\\
&  & \qquad \le  ((2N)^d R(\Lambda))^{\max(1/q-1/p, 0)} \|\chi_n^N
c\|_{\ell^p(\Lambda)} \ {\rm for\ all}\ c\in \ell^p(\Lambda),
\end{eqnarray*}
where $1\le p, q\le \infty, 1\le N\in \mathbb{Z}$ and $n\in
N\mathbb{Z}^d$ (\cite{ald-sl, shinsun09}). Therefore for $1\le
q\le \infty$,
\begin{eqnarray} \label{stability.cr.pf.eq1}
\|\chi_n^{2N} A\chi_n^Nc\|_{\ell^q(\Lambda)} & \ge &  C_0
(2N)^{-d|1/p-1/q|} R(\Lambda')^{\min(1/p-1/q,0)} \nonumber\\
& & \times
R(\Lambda)^{-\max(1/p-1/q,0)} \|\chi_n^N c\|_{\ell^q(\Lambda')} \quad {\rm for \ all}
\ \ c\in \ell^q(\Lambda'),\end{eqnarray} where $N_0\le N\in
\mathbb{Z}$ and $n\in N\mathbb{Z}^d$. We notice that
\begin{eqnarray} \label{stability.cr.pf.eq2}
\inf_{0\le s\le N} \|A-A_s\|_{{\mathcal C}(\Lambda, \Lambda')}+
\frac{ds}{N} \|A\|_{{\mathcal C}(\Lambda, \Lambda')} & \le &
\|A\|_{{\mathcal C}_\gamma (\Lambda, \Lambda')} \inf_{0\le s\le N}
s^\gamma+\frac{ds}{N}\nonumber\\
&\le &(d +1)\|A\|_{{\mathcal C}_\gamma (\Lambda, \Lambda')}
N^{-\gamma/(1+\gamma)}.
\end{eqnarray}
Thus for $1\le q\le \infty$ with
$d|1/p-1/q|<\gamma/(1+\gamma)$, it follows from
\eqref{stability.cr.pf.eq1} and \eqref{stability.cr.pf.eq2} that
there exists a sufficiently large integer $N_0$ such that
\begin{equation}\label{criterion.cr.pf.eq3} \|\chi_n^{2N}
A\chi_n^{N}c\|_{\ell^q(\Lambda)}\ge \alpha \|\chi_n^{N}
c\|_{\ell^q(\Lambda')} \end{equation} hold for all $c\in
\ell^q(\Lambda'), N\ge N_0$ and $n\in N\mathbb{Z}^d$, where
$\alpha$ is a positive constant larger than
 $2 (5+2^{1-q})^{d/q} R(\Lambda)^{1/q} R(\Lambda')^{1-1/q}
\inf_{0\le s\le N_0} \big(\|A-A_s\|_{{\mathcal C}(\Lambda,
\Lambda')}+ \frac{d s}{N_0}\|A\|_{{\mathcal C}(\Lambda,
\Lambda')}\big)$. Then by Theorem \ref{criterion.tm2}, the
infinite matrix $A$ has $\ell^q$-stability for all $1\le q\le
\infty$ with $d|1/q-1/p|<\gamma/(1+\gamma)$. Applying the above
trick repeatedly, we prove the $\ell^q$-stability of the infinite
matrix $A$ for any $1\le q\le \infty$.
\end{proof}

To prove Theorem \ref{criterion.tm2}, we  first recall some basic
properties for infinite matrices $A$ in the class ${\mathcal
C}(\Lambda, \Lambda')$ and  its truncation matrices $A_s, s\ge 0$.

\begin{lemma} \label{truncation.lem}
{\rm (\cite{shinsun09})} Let $1\le p\le \infty$, the subsets
$\Lambda, \Lambda'$ of $\mathbb{R}^d$ be relatively-separated, $A$
be an infinite matrix in  the class ${\mathcal C}(\Lambda,
\Lambda')$, and $A_s, s\ge 0$, be the truncation matrices of $A$.
 Then %$A$ defines a bounded operator on
%$\ell^p$, its operator norm is bounded by the Sj\"ostrand norm,
%that is,
\begin{equation}\label{boundedness.eq}
\|Ac\|_{\ell^p(\Lambda)}\le  R(\Lambda)^{1/p}
R(\Lambda')^{1-1/p}\|A\|_{{\mathcal C}(\Lambda,
\Lambda')}\|c\|_{\ell^p(\Lambda')}\quad {\rm for \ all} \ c\in
\ell^p(\Lambda'),
\end{equation}
\begin{equation}\label{truncation.eq}
\lim_{s\to +\infty} \|A-A_s\|_{{\mathcal C}(\Lambda, \Lambda')}=0,
\end{equation}
\begin{equation}\label{truncation.eq4}\lim_{N\to
+\infty} \inf_{0\le s\le N}\|A-A_s\|_{{\mathcal C}(\Lambda,
\Lambda')}+\frac{ds}{N} \|A\|_{{\mathcal C}(\Lambda,
\Lambda')}=0,\end{equation}
and
\begin{equation}\label{truncation.eq3}\|A_s\|_{\mathcal C}\le
\|A\|_{\mathcal C}  \quad \ {\rm for\ all} \ s\ge 0.\end{equation}
\end{lemma}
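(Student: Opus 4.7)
The plan is to handle the four conclusions of Lemma~\ref{truncation.lem} essentially independently, with \eqref{boundedness.eq} being the only one that needs real work; the others are elementary consequences of the convergence of the defining series for $\|A\|_{{\mathcal C}(\Lambda,\Lambda')}$.

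\textbf{Boundedness \eqref{boundedness.eq}:} First I would discretize in the natural way using the $\Zd$-partition of $\R^d$ that underlies the norm. Set $b(k)=\sup_{\lambda\in\Lambda,\lambda'\in\Lambda'}|a(\lambda,\lambda')|\chi_{k+[-1/2,1/2]^d}(\lambda-\lambda')$, so $\sum_{k\in\Zd}b(k)=\|A\|_{{\mathcal C}(\Lambda,\Lambda')}$. For each $\lambda\in\Lambda$,
\begin{equation*}
|(Ac)(\lambda)|\le \sum_{k\in\Zd} b(k)\sum_{\lambda'\in\Lambda'\cap(\lambda-k+[-1/2,1/2]^d)}|c(\lambda')|.
\end{equation*}
The relative separation hypotheses bound the number of $\lambda'$'s in any unit cube by $R(\Lambda')$ and, symmetrically, the number of $\lambda$'s by $R(\Lambda)$. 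For $p=\infty$ I bound the inner sum by $R(\Lambda')\|c\|_\infty$ and sum in $k$; for $p=1$ I swap the order of summation in $\lambda$ and $\lambda'$ and use the bound $R(\Lambda)$ on the resulting $\lambda$-cardinality. The general-$p$ case is then either a Riesz--Thorin interpolation between the $p=1$ and $p=\infty$ endpoints (which interpolates $R(\Lambda)R(\Lambda')^0$ and $R(\Lambda)^0 R(\Lambda')$ exactly to $R(\Lambda)^{1/p}R(\Lambda')^{1-1/p}$), or a direct H\"older plus Schur argument giving the same constant. The only mild subtlety is making sure that the constant on the cross term is really $R(\Lambda)^{1/p}R(\Lambda')^{1-1/p}$ and no larger; the Schur-type splitting into $b(k)^{1/p}\cdot b(k)^{1-1/p}$ handles this cleanly.

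\textbf{Convergence of the tail \eqref{truncation.eq}:} This is immediate from the definition of $\|\cdot\|_{{\mathcal C}(\Lambda,\Lambda')}$. For $s\ge 0$ the matrix $A-A_s$ has entries $a(\lambda,\lambda')\chi_{\R^d\setminus(-s,s)^d}(\lambda-\lambda')$, so
\begin{equation*}
\|A-A_s\|_{{\mathcal C}(\Lambda,\Lambda')}\;\le\;\sum_{k\in\Zd,\;|k|_\infty\ge s-1/2}b(k),
\end{equation*}
which is the tail of a convergent nonnegative series and therefore tends to $0$ as $s\to\infty$.

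\textbf{Joint smallness \eqref{truncation.eq4}:} This follows from \eqref{truncation.eq} by a standard two-scale choice. Given $\epsilon>0$, use \eqref{truncation.eq} to pick $s_0$ with $\|A-A_{s_0}\|_{{\mathcal C}(\Lambda,\Lambda')}<\epsilon/2$, and then choose $N$ so large that $d s_0\|A\|_{{\mathcal C}(\Lambda,\Lambda')}/N<\epsilon/2$. Inserting $s=s_0$ into the infimum over $0\le s\le N$ gives a value less than $\epsilon$, so the infimum tends to $0$ as $N\to\infty$.

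\textbf{Truncation monotonicity \eqref{truncation.eq3}:} This is immediate, since the entries of $A_s$ are either entries of $A$ or zero; therefore for each $k\in\Zd$ the supremum in the defining series for $\|A_s\|_{\mathcal C}$ is bounded by the corresponding supremum for $\|A\|_{\mathcal C}$, and the inequality follows by summing in $k$. The main obstacle is really only in \eqref{boundedness.eq}: getting the precise constant $R(\Lambda)^{1/p}R(\Lambda')^{1-1/p}$ (rather than $R(\Lambda)R(\Lambda')$) requires either invoking interpolation or carrying out the Schur splitting carefully; everything else reduces to unpacking the definitions.
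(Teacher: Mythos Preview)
Your argument is correct in all four parts. Note, however, that the paper does not supply its own proof of this lemma: it is quoted with attribution to \cite{shinsun09} and used as a black box, so there is no in-paper proof to compare against. Your sketch is the standard one and would be accepted without issue; the Schur/interpolation route for \eqref{boundedness.eq} does produce exactly the constant $R(\Lambda)^{1/p}R(\Lambda')^{1-1/p}$, and the remaining three claims are, as you say, immediate from the absolute convergence of the defining series.
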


Let $\psi_0(x_1, \ldots, x_d)=\prod_{i=1}^d \max( \min(2-2|x_i|, 1),
0)$ be a cut-off
 function on $\mathbb{R}^d$. Then
\begin{equation}\label{cutoff.eq1}
0\le \chi_{[-1/2, 1/2]^d}(x) \le \psi_0(x)\le \chi_{(-1,
1)^d}(x)\le 1 \quad {\rm for \ all} \ x\in \mathbb{R}^d,
\end{equation}
and
\begin{equation}
\label{cutoff.eq3}
 |\psi_0(x)-\psi_0(y)|\le 2 d \|x-y\|_\infty \quad \ {\rm for \ all} \ x,
y\in \mathbb{R}\end{equation}
 where $\|x\|_\infty=\max_{1\le i\le d} |x_i|$ for $x=(x_1, \ldots, x_d)$.
 Define the multiplication operator
$\Psi_n^N$ on $\ell^p(\Lambda)$ by
\begin{equation}\label{multiplication.def}
\Psi_n^N:\  \ell^p(\Lambda)\ni (c(\lambda))_{\lambda\in
\Lambda}\longmapsto
\Big(\psi_0\big(\frac{\lambda-n}{N}\big)c(\lambda)\Big)_{\lambda\in
\Lambda}\in \ell^p(\Lambda).
\end{equation}
Applying \eqref{cutoff.eq1} %, \eqref{cutoff.eq2}
 and
\eqref{cutoff.eq3} for the cut-off function $\psi_0$, we obtain
the following properties for the multiplication operators
$\Psi_n^N, n\in N\mathbb{Z}$.

\begin{lemma} \label{multiplication.lem} %{\rm (\cite{shinsun09})}
Let $1\le N\in \mathbb{Z}$,  $\Lambda$ be a relatively-separated
subset of $\mathbb{R}^d$, and  the multiplication operators
$\Psi_n^N, n\in N\mathbb{Z}^d$, be as in
\eqref{multiplication.def}. Then
\begin{equation}\label{multiplication.lem.eq1}
\|\Psi_n^Nc\|_{\ell^p(\Lambda)}\le \|\chi_n^N
c\|_{\ell^p(\Lambda)} \quad {\rm for\  all}\  c\in \ell^p(\Lambda)
\end{equation}  where $1\le p\le \infty$,
\begin{equation} \label{multiplication.lem.eq2} \|c\|_{\ell^p(\Lambda)}\le\Big(\sum_{n\in N\mathbb{Z}^d}
\|\Psi_n^N c\|_{\ell^p(\Lambda)}^p\Big)^{1/p}\le 2^{d/p}
\|c\|_{\ell^p(\Lambda)} \quad \ {\rm for \ all} \ c\in
\ell^p(\Lambda)
\end{equation}
\begin{equation}\label{multiplication.lem.eq3}
4^{d/p} \|c\|_{\ell^p(\Lambda)}\le\Big(\sum_{n\in N\mathbb{Z}^d}
\|\Psi_n^{4N} c\|_{\ell^p(\Lambda)}^p\Big)^{1/p}\le
(5+2^{1-p})^{d/p} \|c\|_{\ell^p(\Lambda)} \ \ {\rm for \ all} \
c\in \ell^p(\Lambda),
\end{equation}
where $1\le p<\infty$,  and
\begin{equation} \label{multiplication.lem.eq4}\|c\|_{\ell^\infty(\Lambda)} = \sup_{n\in N\mathbb{Z}^d}
\|\Psi_n^N c\|_{\ell^\infty(\Lambda)}= \sup_{n\in N\mathbb{Z}^d}
\|\Psi_n^{4N} c\|_{\ell^\infty(\Lambda)} \quad \ {\rm for \ all} \
c\in \ell^\infty(\Lambda).
\end{equation}
\end{lemma}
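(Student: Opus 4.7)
My strategy is to reduce all four assertions to pointwise estimates on the tensor-product weight $\psi_0(x)=\prod_{i=1}^d\psi_0^{(1)}(x_i)$, where $\psi_0^{(1)}(t)=\max(\min(2-2|t|,1),0)$, combined with an elementary one-dimensional calculation of sums of the form $\sigma_s(\tau):=\sum_{m\in\mathbb{Z}}\psi_0^{(1)}((\tau-m)/s)^p$ for $s\in\{1,4\}$. Every step rests on \eqref{cutoff.eq1}; the Lipschitz bound \eqref{cutoff.eq3} plays no role in this lemma.

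The two pointwise assertions are quick. For \eqref{multiplication.lem.eq1}, the rescaled form of $\psi_0\le\chi_{(-1,1)^d}$ gives $|\Psi_n^N c(\lambda)|\le|\chi_n^N c(\lambda)|$ pointwise, whence the $\ell^p$-bound. For \eqref{multiplication.lem.eq4}, $\psi_0\le 1$ yields $\|\Psi_n^{N}c\|_\infty\le\|c\|_\infty$ and $\|\Psi_n^{4N}c\|_\infty\le\|c\|_\infty$; for the matching lower bound, every $\lambda\in\Lambda$ lies within $\ell^\infty$-distance $N/2$ of some $n\in N\mathbb{Z}^d$, so that $\psi_0((\lambda-n)/N)=\psi_0((\lambda-n)/(4N))=1$ and $|c(\lambda)|$ is attained at that $\lambda$ by both $\Psi_n^{N}c$ and $\Psi_n^{4N}c$.

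The two-sided summability estimates \eqref{multiplication.lem.eq2} and \eqref{multiplication.lem.eq3} follow from interchanging summations over $\lambda$ and $n$:
\[
\sum_{n\in N\mathbb{Z}^d}\|\Psi_n^{sN}c\|_{\ell^p(\Lambda)}^p=\sum_{\lambda\in\Lambda}|c(\lambda)|^p\,S_s(\lambda),\qquad S_s(\lambda)=\prod_{i=1}^d\sigma_s(\lambda_i/N),
\]
reducing the task to two-sided bounds on the 1D sum $\sigma_s$. For $s=1$ the sum has at most two nonzero terms and a direct check gives $\sigma_1(\tau)\in[1,2]$ for every $\tau$, which after tensorization and extracting $p$-th roots yields \eqref{multiplication.lem.eq2}. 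For $s=4$ the sum $\sigma_4$ is $1$-periodic, and its lower bound $\sigma_4\ge 4$ follows from $\psi_0^{(1)}\ge\chi_{[-1/2,1/2]}$: the sum is then bounded below by the number of integers in a closed interval of length $4$, which is always at least $4$.

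The only step of real substance is the upper bound $\sigma_4(\tau)\le 5+2^{1-p}$. After writing out the piecewise-linear pieces of $\psi_0^{(1)}$ on a fundamental domain $\tau\in[0,1]$ and separating the four plateau indices $m\in\{-1,0,1,2\}$ (each contributing $1$) from the four slope indices $m\in\{-3,-2,3,4\}$, and setting $v=\tau/2\in[0,1/2]$, the claim reduces to
\[
g(v):=\bigl[(1/2-v)^p+(1/2+v)^p\bigr]+\bigl[v^p+(1-v)^p\bigr]\le 1+2^{1-p}.
\]
Each bracket is a convex function of $v$ on $[0,1/2]$, so $g$ is convex there and attains its maximum at an endpoint; direct evaluation gives $g(0)=g(1/2)=1+2^{1-p}$, and the bound follows. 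Tensorizing then gives $4^d\le S_4(\lambda)\le(5+2^{1-p})^d$, completing \eqref{multiplication.lem.eq3}. This convexity step is the only part of the proof that is not pure bookkeeping from \eqref{cutoff.eq1}.
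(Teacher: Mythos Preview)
Your proof is correct and complete. The paper itself gives no detailed argument for this lemma, merely remarking that it follows from the cutoff properties \eqref{cutoff.eq1} and \eqref{cutoff.eq3}; your write-up supplies the missing details along the natural line of attack (Fubini plus a one-dimensional computation of the periodic weight sum), and the convexity trick for the sharp upper bound $5+2^{1-p}$ is clean. Your observation that the Lipschitz estimate \eqref{cutoff.eq3} is not actually needed here is accurate: only the pointwise sandwich $\chi_{[-1/2,1/2]^d}\le\psi_0\le\chi_{(-1,1)^d}$ from \eqref{cutoff.eq1} enters, while \eqref{cutoff.eq3} is reserved for the commutator estimate \eqref{commutator.eq} later in the paper.
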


To prove Theorem \ref{criterion.tm}, we also need the following
result.

\begin{lemma} {\rm (\cite{shinsun09})}\
Let $N\ge 1$, the subsets $\Lambda, \Lambda'$ of $\mathbb{R}^d$ be
relatively-separated,  $A$ be an infinite matrix in the class
${\mathcal C}(\Lambda, \Lambda')$, $A_N$ be the truncation matrix
of $A$, and $\Psi_n^N, n\in N\mathbb{Z}^d$, be the multiplication
operators in \eqref{multiplication.def}.  Then
\begin{equation}\label{commutator.eq}
 \|\Psi_n^NA_N -A_N\Psi_n^N\|_{{\mathcal C}(\Lambda, \Lambda')}\le  \inf_{0\le s\le N} \Big(\|A_N-A_s\|_{{\mathcal
C}(\Lambda, \Lambda')}+\frac{2ds}{N} \|A_s\|_{{\mathcal
C}(\Lambda, \Lambda')}\Big).
\end{equation}
\end{lemma}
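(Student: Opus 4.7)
The plan is to handle the commutator $\Psi_n^N A_N - A_N \Psi_n^N$ entrywise and then to split $A_N$ into a ``near-diagonal'' part $A_s$ and a ``far-off-diagonal'' part $A_N - A_s$, estimating each commutator by the appropriate feature of $\psi_0$. Concretely, the $(\lambda,\lambda')$ entry of the commutator equals
\[
\Big(\psi_0\big(\tfrac{\lambda-n}{N}\big)-\psi_0\big(\tfrac{\lambda'-n}{N}\big)\Big)\,a_N(\lambda,\lambda'),
\]
so it is natural to control it by pointwise estimates on the scalar factor $\psi_0\big(\tfrac{\lambda-n}{N}\big)-\psi_0\big(\tfrac{\lambda'-n}{N}\big)$.

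First I would fix $s\in[0,N]$ and write $A_N=A_s+(A_N-A_s)$, which gives
\[
\Psi_n^N A_N - A_N\Psi_n^N \;=\; \big(\Psi_n^N A_s - A_s\Psi_n^N\big) \;+\; \big(\Psi_n^N(A_N-A_s) - (A_N-A_s)\Psi_n^N\big),
\]
and estimate the two commutators by the triangle inequality in $\mathcal{C}(\Lambda,\Lambda')$. For the first piece, the entries of $A_s$ vanish unless $\|\lambda-\lambda'\|_\infty<s$, and in that range the Lipschitz bound \eqref{cutoff.eq3} gives
\[
\Big|\psi_0\big(\tfrac{\lambda-n}{N}\big)-\psi_0\big(\tfrac{\lambda'-n}{N}\big)\Big| \le 2d\,\tfrac{\|\lambda-\lambda'\|_\infty}{N} \le \tfrac{2ds}{N}.
\]
Taking the sup in each diagonal $k+[-1/2,1/2]^d$ and summing over $k$ therefore yields $\|\Psi_n^N A_s - A_s\Psi_n^N\|_{\mathcal C(\Lambda,\Lambda')}\le \tfrac{2ds}{N}\|A_s\|_{\mathcal C(\Lambda,\Lambda')}$.

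For the second piece I would not use the Lipschitz estimate (it is wasteful on the far-off-diagonal band), but simply the crude pointwise bound $|\psi_0(x)-\psi_0(y)|\le 1$, which is valid because $0\le \psi_0\le 1$ forces $|\psi_0(x)-\psi_0(y)|\le\max(\psi_0(x),\psi_0(y))\le 1$. Then the $(\lambda,\lambda')$ entry of $\Psi_n^N(A_N-A_s)-(A_N-A_s)\Psi_n^N$ is bounded in absolute value by $|(a_N-a_s)(\lambda,\lambda')|$, so the $\mathcal C(\Lambda,\Lambda')$-norm of this commutator is bounded by $\|A_N-A_s\|_{\mathcal C(\Lambda,\Lambda')}$. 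Adding the two contributions and taking the infimum over $0\le s\le N$ gives the asserted inequality.

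No step is especially difficult; the only small subtlety is the choice of estimate for $|\psi_0(x)-\psi_0(y)|$ on each of the two regions (Lipschitz for $\|\lambda-\lambda'\|_\infty<s$ and the trivial bound for $\|\lambda-\lambda'\|_\infty\ge s$), which is precisely what produces the interpolated bound $\|A_N-A_s\|_{\mathcal C(\Lambda,\Lambda')}+\tfrac{2ds}{N}\|A_s\|_{\mathcal C(\Lambda,\Lambda')}$ that appears on the right-hand side. The rest is a direct computation with the definition of $\|\cdot\|_{\mathcal C(\Lambda,\Lambda')}$ as a sum of sups over unit diagonals.
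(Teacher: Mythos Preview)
Your argument is correct. The paper does not actually prove this lemma but merely cites \cite{shinsun09}; the entrywise computation you give---splitting $A_N=A_s+(A_N-A_s)$ and using the Lipschitz bound \eqref{cutoff.eq3} on the near-diagonal piece and the trivial bound $|\psi_0(x)-\psi_0(y)|\le 1$ on the remainder---is precisely the natural proof and is the one given in the cited reference.
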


Now we start to prove Theorem \ref{criterion.tm2}.

\begin{proof}[Proof of Theorem \ref{criterion.tm2}]\quad
(i)$\Longrightarrow$(ii):\quad By the $\ell^p$-stability of the
infinite matrix $A$, there exists a positive constant $C_0$
(independent of $n\in N\mathbb{Z}^d$ and $1\le N\in \mathbb {Z}$)
such that
\begin{equation}\label{thm1.proof.eq1}
\|A\chi_n^N c\|_{\ell^p(\Lambda)}\ge C_0 \|\chi_n^N
c\|_{\ell^p(\Lambda')} \quad {\rm for \ all} \ c\in
\ell^p(\Lambda'),
\end{equation}
 where $n\in N\mathbb{Z}^d$ and $N\ge 1$.
Noting \begin{equation}\label{thm1.proof.eq1b}\chi_n^{2N} A_N
\psi_n^N= A_N \psi_n^N\end{equation} and applying
\eqref{boundedness.eq} yield
\begin{eqnarray}\label{thm1.proof.eq2}
& & \| A \chi_n^{N} c-\chi_n^{2N} A \chi_n^Nc\|_{\ell^p(\Lambda)}\nonumber\\
 &=
&
\|(I-\chi_n^{2N}) (A-A_N)\chi_n^Nc\|_{\ell^p(\Lambda)}\nonumber\\
&\le &  R(\Lambda)^{1/p} R(\Lambda')^{1-1/p}\|A-A_N\|_{{\mathcal
C}(\Lambda,\Lambda')} \|\chi_n^{N} c\|_{\ell^p(\Lambda')},
\end{eqnarray}
where $I$ is the identity operator. Combining the estimates in
\eqref{thm1.proof.eq1} and \eqref{thm1.proof.eq2} proves that
\begin{equation} \label{thm1.proof.eq3}
\| \chi_n^{2N} A \chi_n^Nc\|_{\ell^p(\Lambda)}\ge
\big(C_0-R(\Lambda)^{1/p} R(\Lambda')^{1-1/p}\|A-A_N\|_{{\mathcal
C}(\Lambda, \Lambda')}\big)\|\chi_n^{N} c\|_{\ell^p(\Lambda')}
\end{equation}
hold for  all $c\in \ell^p(\Lambda')$, where $n\in N\mathbb{Z}^d$
and $N\ge 1$. The conclusion (ii) then follows from
\eqref{truncation.eq} and \eqref{thm1.proof.eq3}.

\bigskip
 (ii)$\Longrightarrow$(iii):\quad The implication follows from \eqref{truncation.eq4}.

\bigskip

(iii)$\Longrightarrow$(i): \quad Let $1\le p<\infty$. Take any
$n\in N_0\mathbb{Z}^d$ and $c\in \ell^p(\Lambda')$. By the
assumption (iii) for the infinite matrix $A$,
\begin{equation} \label{thm1.proof.eq5}
\|\chi_n^{2N_0} A \Psi_n^{N_0}
c\|_{\ell^p(\Lambda)}=\|\chi_n^{2N_0} A \chi_n^{N_0} \Psi_n^{N_0}
c\|_{\ell^p(\Lambda)}\ge  \alpha  \|\Psi_n^{N_0}
c\|_{\ell^p(\Lambda')}.
\end{equation}
 This together with \eqref{boundedness.eq} and \eqref{thm1.proof.eq1b} implies that
\begin{eqnarray} \label{thm1.proof.eq6}
& & \|A_{N_0}\Psi_n^{N_0} c\|_{\ell^p(\Lambda)}\nonumber \\
& = &  \|\chi_n^{2N_0} (A_{N_0}-A+A)
\Psi_n^{N_0}c\|_{\ell^p(\Lambda)}\nonumber\\
 & \ge &  \|\chi_n^{2{N_0}} A \chi_n^{N_0} \Psi_n^{N_0} c\|_{\ell^p(\Lambda)}-
\|\chi_n^{2{N_0}}
(A_{N_0}-A)\Psi_n^{N_0}c\|_{\ell^p(\Lambda)}\nonumber\\
& \ge &  \big(  \alpha- R(\Lambda)^{1/p}
R(\Lambda')^{1-1/p}\|A-A_{N_0}\|_{{\mathcal C}(\Lambda,
\Lambda')}\big)
\|\Psi_n^{N_0} c\|_{\ell^p(\Lambda')}. % \quad \ {\rm for \ all} \ c\in \ell^p.
\end{eqnarray}
From \eqref{boundedness.eq} and \eqref{commutator.eq}
 it follows that
\begin{eqnarray} \label{thm1.proof.eq7}
& & \|(\Psi_n^{N_0} A_{N_0}-A_{N_0} \Psi_n^{N_0})
c\|_{\ell^p(\Lambda)}\nonumber\\
 & = & \|(\Psi_n^{N_0} A_{N_0}-A_{N_0}
\Psi_n^{N_0}) \Psi_n^{4{N_0}}c\|_{\ell^p(\Lambda)}\nonumber\\
&  \le & R(\Lambda)^{1/p} R(\Lambda')^{1-1/p}\|\Psi_n^{N_0}
A_{N_0}-A_{N_0} \Psi_n^{N_0}\|_{{\mathcal C}(\Lambda, \Lambda')}
\|\Psi_n^{4{N_0}}
c\|_{\ell^p(\Lambda')}\nonumber\\
& \le & R(\Lambda)^{1/p} R(\Lambda')^{1-1/p}\nonumber\\
& & \times
 \inf_{0\le s\le {N_0}} \Big(\|A_{N_0}-A_s\|_{\mathcal
C}+\frac{2ds}{{N_0}} \|A_{N_0}\|_{\mathcal C}\Big)
\|\Psi_n^{4{N_0}}c\|_{\ell^p(\Lambda')}. 
\end{eqnarray}

Combining %\eqref{truncation.eq2},
\eqref{multiplication.lem.eq2}, \eqref{multiplication.lem.eq3},
\eqref{thm1.proof.eq6} and \eqref{thm1.proof.eq7}, we  get
\begin{eqnarray*}
& & 2^{d/p}\|A_{N_0} c\|_{\ell^p(\Lambda)}  \ge  \Big(\sum_{n\in
{N_0}\mathbb{Z}} \|\Psi_n^{N_0}
A_{N_0} c\|_{\ell^p(\Lambda)}^p\Big)^{1/p}\nonumber\\
& \ge  &  \Big(\alpha- R(\Lambda)^{1/p} R(\Lambda')^{1-1/p}
\|A-A_{N_0}\|_{{\mathcal C}(\Lambda, \Lambda')}\Big)
\Big(\sum_{n\in
{N_0}\mathbb{Z}} \|\Psi_n^{N_0}  c\|_{\ell^p(\Lambda')}^p\Big)^{1/p}\nonumber\\
& & - R(\Lambda)^{1/p} R(\Lambda')^{1-1/p}
 \inf_{0\le s\le {N_0}} \Big(\|A_{N_0}-A_s\|_{{\mathcal
C}(\Lambda, \Lambda')}+\frac{2ds}{{N_0}} \|A_{N_0}\|_{{\mathcal
C}(\Lambda, \Lambda')}\Big) \nonumber\\
& & \times \Big(\sum_{n\in
{N_0}\mathbb{Z}}  \|\Psi_n^{4{N_0}}c\|_{\ell^p(\Lambda')}^p\Big)^{1/p}\nonumber\\
& \ge & \Big(\alpha-R(\Lambda)^{1/p}
R(\Lambda')^{1-1/p}\|A-A_{N_0}\|_{{\mathcal C}(\Lambda,
\Lambda')}- (5+2^{1-p})^{1/p} R(\Lambda)^{1/p}
R(\Lambda')^{1-1/p}\nonumber\\
& & \times \inf_{0\le s\le {N_0}} \big(\|A_{N_0}-A_s\|_{{\mathcal
C}(\Lambda, \Lambda')}+\frac{2ds}{{N_0}} \|A_{N_0}\|_{{\mathcal
C}(\Lambda, \Lambda')}\big)\Big  )\|c\|_{\ell^p(\Lambda')}.
\end{eqnarray*}
Therefore
\begin{eqnarray*}
 & & \|Ac\|_{\ell^p(\Lambda)} \ge \|A_{N_0}c\|_{\ell^p(\Lambda)}- \|(A-A_{N_0})c\|_{\ell^p(\Lambda)} \nonumber\\
&\ge & 2^{-1/p}\Big (\alpha-(1+2^{d/p})R(\Lambda)^{1/p}
R(\Lambda')^{1-1/p}\|A-A_{N_0}\|_{{\mathcal C}(\Lambda, \Lambda')}\nonumber\\
& & - (5+2^{1-p})^{d/p}R(\Lambda)^{1/p}
R(\Lambda')^{1-1/p}\nonumber\\
& & \times \inf_{0\le s\le {N_0}} \big(\|A_{N_0}-A_s\|_{{\mathcal
C}(\Lambda, \Lambda')}+\frac{2ds}{{N_0}}
\|A_{N_0}\|_{{\mathcal C}(\Lambda, \Lambda')}\big) \Big ) \|c\|_{\ell^p(\Lambda')}\nonumber\\
& \ge & 2^{-d/p} \Big(\alpha- 2
(5+2^{1-p})^{1/p}R(\Lambda)^{1/p}\nonumber\\
& &\times R(\Lambda')^{1-1/p} \inf_{0\le s\le {N_0}}
\big(\|A-A_s\|_{{\mathcal C}(\Lambda, \Lambda')}+\frac{ds}{{N_0}}
\|A\|_{{\mathcal C}(\Lambda, \Lambda')}\big)\Big  )
\|c\|_{\ell^p(\Lambda')},
\end{eqnarray*}
and the conclusion (i) for $1\le p<\infty$ follows.

The conclusion (i) for $p=\infty$ can be proved  by similar
argument. We omit the details here.
\end{proof}

The author thanks Professors Deguang Han,  Zuhair M. Nashed,
Xianliang Shi, and Wai-Shing Tang for their discussion and
suggestions in preparing the manuscript.

\end{document}